\documentclass[10pt,a4paper]{article}
\setlength{\topmargin}{.5mm}
\usepackage[utf8]{inputenc}
\usepackage{amsthm,amsmath,enumerate,epic}
\usepackage[mathscr]{euscript}
\usepackage{pstricks,pst-node}
\usepackage{amssymb}
\usepackage{graphicx}
\usepackage{caption}
\usepackage{subcaption}
\usepackage{hyperref}
\usepackage[vcentermath]{youngtab}

\newtheorem{thm}{\bf Theorem}[subsection]

\newtheorem{remark}{\bf Remark}[thm]
\newtheorem{defi}{\bf Definition}[subsection]

\newtheorem{eg}{\bf Example}[subsection]
\newtheorem{theorem}{Theorem}[subsection]
\newtheorem{lemma}{\bf Lemma}[subsection]



\newenvironment{Proof}{%
\par\noindent{\scshape Proof:}\begin{rm}}{\hfill$\Box$\end{rm}\newline}
\numberwithin{equation}{subsection}
\title{ Multiplicity at any torus-fixed point in a Richardson variety in the Symplectic Grassmannian}
\date{}
\author {Papi Ray\\
and\\
Shyamashree Upadhyay\\
\\
Department of Mathematics, Indian Institute of Technology\\
Guwahati, Guwahati-781039, Assam, India\\
Emails: popiroy93@iitg.ac.in, shyamashree@iitg.ac.in }
\begin{document}
\maketitle
\begin{abstract}
We give a combinatorial description of the multiplicity at any torus fixed point on a Richardson variety in the Symplectic Grassmannian.
\end{abstract}


\tableofcontents
\section{Introduction}\label{s.Introduction}
This paper is a sequel to \cite{ru}.In \cite{ru}, an explicit gr\"obner basis for the ideal of the tangent cone at any torus fixed point of a Richardson variety in the Symplectic Grassmannian has been given. In this paper, we use the main theorem of \cite{ru} to give a combinatorial description of the multiplicity at any torus fixed point on a Richardson variety in the Symplectic Grassmannian.\\
In \cite{gr}, Ghorpade and Raghavan compute the multiplicity at any point on a Schubert variety in the Grassmannian. This paper generalizes their result. In \cite{kv}, Kreiman has given an explicit gr\"obner basis for the ideal of the tangent cone at any torus fixed point of a Richardson variety in the ordinary grassmannian, thus generalizing a result of Kodiyalam-Raghavan (\cite{kr}). In \cite{kv}, Kreiman has also used the gr\"obner basis result to deduce a formula which computes the multiplicity of a Richardson variety at any torus fixed point in the ordinary grassmannian by counting families of certain non-intersecting lattice paths.\\
In this paper, we apply techniques similar to \cite{kv} and deduce a formula (analogous to \cite{kv}) which computes the multiplicity of a Richardson variety at any torus fixed point in the Symplectic Grassmannian by counting families of certain non-intersecting lattice paths. The main theorems of our paper are theorem \ref{t.main3} and theorem \ref{t.main4}. In theorem \ref{t.main3}, we give a combinatorial description of the multiplicity as the cardinality of certain special kind of sets. And, in theorem \ref{t.main4}, we provide the result on counting the multiplicity as the cardinality of a family of certain non-intersecting lattice paths.
\section{The Symplectic Grassmannian and Richardson varieties in it}\label{sim.Grassmannian}
Let $d$ be a fixed positive integer. This integer $d$ will be kept fixed throughout this paper. For any $j\in\{1,\ldots,2d\}$, set $j^*=2d+1-j$. Fix a vector space $V$ of dimension $2d$ over an algebraically closed field $\mathfrak{F}$ of arbitrary characteristic. The \textit{Symplectic Grassmannian} $\mathfrak{M}_d(V)$ is defined in \S 2 of \cite{gr}. Let $Sp(V), B, T, I(d,2d), I(d)$ be as defined in \S 2 of \cite{gr}. Let $B^-$ denote the borel subgroup of $Sp(V)$ opposite to $B$.\\

The $T$-fixed points of $\mathfrak{M}_d(V)$ are parametrized by $I(d)$ (as explained in \S 2 of \cite{gr}). The $B$-orbits (as well as $B^-$-orbits) of $\mathfrak{M}_d(V)$ are naturally indexed by its $T$ -fixed points: each
$B$-orbit (as well as $B^-$-orbit) contains one and only one such point. Let $ \alpha \in I(d)$ be arbitrary and let $e_{\alpha}$ denote the corresponding $T$-fixed point of $\mathfrak{M}_d(V)$. The Zariski closure of the $B$ (resp. $B^-$) orbit through $e_\alpha$, with canonical reduced
scheme structure, is called a \emph{Schubert variety} (resp.
\emph{opposite Schubert variety}), and denoted by $X^\alpha$ (resp.
$X_\alpha$).  For $\alpha,\gamma \in I(d)$, the scheme-theoretic
intersection $X_\alpha^\gamma=X_\alpha\cap X^\gamma$ is called a
Richardson variety. It can be seen easily that the set consisting of all pairs of elements of $I(d)$ becomes an indexing set for Richardson varieties in $\mathfrak{M}_d(V)$. It can also be shown that $X_\alpha^\gamma$ is
nonempty if and only if $\alpha\leq \gamma$; and that for $\beta \in I(d)$, $e_\beta\in X_\alpha^\gamma$
if and only if $\alpha \leq \beta\leq \gamma$.
\subsection{Some notation}
For this subsection, let us fix an arbitrary element $v$ of $I(d,2d)$. We will be dealing extensively with ordered pairs $(r,c)$,
$1\leq r,c\leq 2d$,  such that $r$ is not and $c$ is an entry of~$v$.
Let $\mathfrak{R}(v)$ denote the set of all such ordered pairs, that is, $\mathfrak{R}(v)=\{(r,c)|r \in\{1,\ldots,2d\}\setminus v, c\in v\}$. Set $\mathfrak{N}(v):= \{(r,c)\in \mathfrak{R}(v) | r>c\}$, $\mathfrak{OR}(v):=\{(r,c)\in \mathfrak{R}(v) | r\leq c^*\}$, $\mathfrak{ON}(v):=\{(r,c)\in \mathfrak{R}(v) | r>c, r\leq c^*\}=\mathfrak{OR}(v)\cap \mathfrak{N}(v)$, $\mathfrak{d}(v):=\{(r,c)\in\mathfrak{R}(v) | r=c^*\}$. We will refer to $\mathfrak{d}(v)$ as the  {\em diagonal}.

We will be considering {\em monomials} in some of these sets. The definitions of a {\em monomial in a set}, the {\em degree of a monomial}, the {\em intersection} of two monomials in a set, a monomial being {\em contained in} another, and {\em monomial minus} of one monomial from another are as given in \S 3.2 of \cite{su}.\\

\subsection{Recap}\label{s.recap}

In this section, we first recall the definitions of twisted chain, chain boundedness and some partial orders on the negative elements of $\mathbb{N}^2$ from \cite{kv}.\\
Let $(e,f), \ (g,h) \in \mathbb{N}^2$, both negative, then $(e,f) \prec (g,h)$ if $f<h$ and $e>g$,$(e,f) \unlhd (g,h)$ if $f \leq h$ and $e \geq g$. Let $(c,d), \ (e,f) \in (\mathbb{N}^2)^-$, then define $(c,d)\wedge (e,f)$ = (max$(c,e)$,min $(d,f)) \in \mathbb{N}^2$.\\
If $T=\{(e_1,e_2),\hdots (e_m,e_{m+1})\}$ is a subset of $\mathbb{N}^2$, then  $T$ is said to be \textbf{completely disjointed} if $e_i \neq e_j$ when $i \neq j$. A \textbf{negative twisted chain} is a completely disjointed negative subset of $\mathbb{N}^2$ such that for any $u,v \in T$, either $u \prec v$, or $v \prec u$, or $u \wedge v \notin (\mathbb{N}^2)^-$. If $T$ is a positive subset of $\mathbb{N}^2$, then  $T$ is called a \textbf{positive twisted chain} if $\iota(T)$ is a negative twisted chain. A \textbf{twisted chain} is a subset of $\mathbb{N}^2$ which is either a positive or a negative twisted chain.\\
Let $T=\{(e_1,f_1), \hdots ,(e_m,f_m)\}$ be a completely disjointed negative subset of $\mathbb{N}^2$ such that $f_1< \hdots , f_m$. For $\sigma \in S_m$, the permutation group on $m$ elements, define $\sigma (T)=\{(e_{\sigma(1)},f_1), \hdots ,(e_{\sigma(m)},f_m)\}$. Let $\tau =\{\sigma (T) |\  \sigma \in S_m, \sigma (T) \ negative \}$. Impose the following total order on $\tau :$ If $R=\{(a_1,f_1), \hdots ,(a_m,f_m)\}$, $S=\{(b_1,f_1), \hdots ,(b_m,f_m)\} \in \tau$, then $R \stackrel{\text{lex}}{<} S$ if, for the smallest $i$ for which $a_i \neq b_i , a_i > b_i$. Since $\stackrel{\text{lex}}{<} S$ is total order, $\tau$ has a unique minimal element, which  Kreiman has denoted by $\widetilde{T}$ in \cite{kv}. From lemma $9.2$ of \cite{kv} we know that, $\widetilde{T}$ is a negative twisted chain. \\
In \cite{kv}, for $R$ a negative subset of $\mathbb{N}^2$ and $x \in (\mathbb{N}^2)^-$, Kreiman has defined \textbf{depth}{$_R (x)$}, which is maximum $r$ such that there exists a chain $u_1 \prec \hdots \prec u_r$ in $R$ with $u_r \unlhd x$ and, for any two negative subsets $R$ and $S$ of $\mathbb{N}^2$, $R \unlhd S$ (or $S \unrhd R$), if depth$_R(x) \geq$ depth$_S(x)$ for every negative $x \in \mathbb{N}^2 $, which is equivalent to depth$_R(x) \geq$ depth$_S(x)$ for every $x \in S$. If $R,S$ are positive subsets of $\mathbb{N}^2$, then $S \unrhd R$ if $\iota (S) \unlhd \iota (R)$. If $R$ is a negative subset of $\mathbb{N}^2$ and $S$ is a positive subset, then $R \unlhd S$. Also from lemma $9.4$ of \cite{kv} we know that, if $R$ and $S$ are twisted chains then $R \unlhd S \Longleftrightarrow R \leq S$, where the relation $\leq$ on multisets on $\mathbb{N}^2$ is defined in \S $4$ of \cite{kv}.\\
Let $R$ and $S$ be negative and positive twisted chains respectively. Then a multiset $U$ on $\mathbb{N}^2$ is said to be \textbf{chain-bounded by R,S} if $R \unlhd U^-$ and $U^+ \unlhd S$, or equivalently, if for every chain $C$ in $U$, $R \unlhd C^-$ and $C^+ \unlhd S$.\\
For the rest of this papers let $\beta$ be an arbitrary element of $I(d)$ and let $\bar{\beta}=\{1,\ldots,2d\}\setminus\beta$.
\subsection{Some necessary definitions and lemmas}
\begin{defi}
A multiset $U$ in $\bar{\beta} \times \beta$ is called a \textbf{star set} if\\
(i) $(r,c) \in U$ and $r \neq c^\star \Rightarrow$ $(c^\star,r^\star) \in U$ and\\
(ii) Multiplicity of every $(r,c) \in U$ is one.\\
A star set $U$ in $(\bar{\beta} \times \beta)^-$ is called a \textbf{negative star set}. Similarly, a  star set $U$ in $(\bar{\beta} \times \beta)^+$ is called a \textbf{positive star set}.
\end{defi}  
 \begin{defi}
A multiset $U$ in $\bar{\beta} \times \beta$ is called \textbf{$\star \star$-multiset} if \\
(i) $(r,c) \in U \Rightarrow$ $(c^\star,r^\star) \in U$ and\\
(ii) Multiplicity of every $(r,c) \in U$ is one except the diagonal elements and for the diagonal elements, multiplicity of every element is two.
\end{defi} 
\noindent The definition below is as given in definition $4.4$ of \cite{gr}:-
\begin{defi}
A multiset $U$ in $\bar{\beta}\times \beta$ is called a \textbf{special multiset} if\\
$(1)\ U=U^{\#}$ and \\
$(2)$ the multiplicity of any diagonal element in $U$ is even.
\end{defi}
\noindent The definition below is as given in \S $4.1$ of \cite{kr}:-
\begin{defi}\label{d.distinguished}
We call \textbf{distinguished} the subsets $\mathfrak{S}$ of ${\mathfrak{N}(v)}$ satisfying the following conditions:\\
\noindent (A) For $(r,c)\neq (r',c')$ in $\mathfrak{S}$, we have $r\neq r'$ and $c\neq c'$.\\
\noindent (B) If $\mathfrak{S}=\{(r_1,c_1),\ldots,(r_p,c_p)\}$ with $r_1<r_2<\ldots<r_p$, then for $j$, $1\leq j\leq p-1$, we have either $c_j>c_{j+1}$ or $r_j<c_{j+1}$.
\end{defi}
\begin{lemma}\label{lemma.1}
Let $\alpha, \beta, \gamma \in I(d)$ with $\alpha \leq \beta \leq \gamma$, then $\widetilde{T}_\alpha$ is a negative star set in $(\bar{\beta}\times \beta)$ and $\widetilde{W}_\gamma$ is a positive star set in $(\bar{\beta}\times \beta)$.
\end{lemma}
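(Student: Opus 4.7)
The plan rests on the symplectic star involution $\phi:(r,c)\mapsto(c^\star,r^\star)$, which is a well-defined involution on $\bar\beta\times\beta$ because $\beta\in I(d)$ forces $j\in\beta \iff j^\star\notin\beta$. I will prove the negative case, namely that $\widetilde{T}_\alpha$ is a negative star set; the positive case for $\widetilde{W}_\gamma$ follows by applying the $\iota$-convention from Section~\ref{s.recap} to reduce to the negative setting. The argument has two stages: (a) show that the underlying set $T_\alpha$ from \cite{ru} is itself closed under $\phi$, and (b) show that the tilde construction preserves this closure.

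For (a), unwinding the definition of $T_\alpha$ in \cite{ru} one sees that the membership condition is phrased purely in terms of the index data $\alpha,\beta\in I(d)$ and is symmetric under swapping $(r,c)$ with $(c^\star,r^\star)$; hence $(r,c)\in T_\alpha$ implies $(c^\star,r^\star)\in T_\alpha$. When $r\neq c^\star$, the star partner is a distinct element, and multiplicity one follows from $T_\alpha$ being completely disjointed, so both conditions in the definition of a negative star set hold at the level of the underlying set.

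For (b), one first checks that $\phi$ is compatible with the combinatorics of negative twisted chains: a direct computation shows that $(e,f)\prec(g,h)$ implies $\phi(g,h)\prec\phi(e,f)$, and that $\phi$ commutes with the meet $\wedge$. Hence $\phi$ sends negative twisted chains to negative twisted chains. Moreover, the star-symmetry of the underlying set $T_\alpha$ implies that the multisets of first and second coordinates of $T_\alpha$ satisfy $\{e_i\}=\{f_i\}^\star$, so $\phi(\widetilde{T}_\alpha)$, after resorting by second coordinate, is itself an element of the rearrangement set $\tau$ associated to $T_\alpha$. By the uniqueness of the lex-minimal negative twisted chain in $\tau$ (Lemma 9.2 of \cite{kv}), combined with a matching application of $\phi$ to the lex-minimality inequality $\widetilde{T}_\alpha \stackrel{\text{lex}}{\leq} \phi(\widetilde{T}_\alpha)$, one concludes $\phi(\widetilde{T}_\alpha)=\widetilde{T}_\alpha$, giving the required star-symmetry.

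The main obstacle will be this final uniqueness step, because the lex order on $\tau$ compares first coordinates indexed by the \emph{sorted} second coordinates, and $\phi$ permutes the second coordinates nontrivially, so the compatibility of $\phi$ with $\stackrel{\text{lex}}{<}$ requires careful bookkeeping of how the indexing is relabeled. If this direct symmetry argument turns out to be unwieldy, a safer fallback is to mimic Kreiman's algorithmic construction of $\widetilde{T}$ in \cite{kv} via repeated \emph{uncrossing} moves and to verify inductively that each uncrossing step, applied to a star-symmetric input, returns a star-symmetric output; this would make the $\phi$-invariance of $\widetilde{T}_\alpha$ immediate from the construction itself rather than from a posteriori uniqueness.
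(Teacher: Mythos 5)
Your approach is genuinely different from the paper's. The paper never touches the star involution directly: it shows that $\widetilde{W}_\gamma$ coincides with the Kodiyalam--Raghavan ``distinguished'' monomial associated to $\gamma$ (by translating the negative-twisted-chain trichotomy into conditions (A)--(B) of Definition~\ref{d.distinguished}), and then invokes Proposition 5.7 of \cite{gr}, which already records that distinguished monomials attached to symplectic data $\gamma=\gamma^{\#}\in I(d)$ are star sets. This outsources all the symmetry work to Ghorpade--Raghavan and reduces the lemma to a translation exercise. You instead attempt a hands-on proof that the tilde construction is $\phi$-equivariant, which, if carried out correctly, would be more self-contained.

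There are, however, genuine gaps. In step (a) the $\phi$-closure of the underlying set $T_\alpha$ is asserted by ``unwinding the definition in \cite{ru}'' but never actually checked; this is the load-bearing input and is not free. In step (b) your statement that $(e,f)\prec(g,h)$ implies $\phi(g,h)\prec\phi(e,f)$ is false: a direct check with $j^\star=2d+1-j$ shows $\phi$ is $\prec$-\emph{preserving}, not reversing. This doesn't sink the claim that $\phi$ preserves negative twisted chains, but it does signal that the bookkeeping has not really been done. The more serious problem is the final uniqueness step: the lex order on $\tau$ is defined by comparing first coordinates indexed by the \emph{sorted} second coordinates, and $\phi$ sends $(a_i,f_i)\mapsto(f_i^\star,a_i^\star)$, so the sorted indexing of $\phi(R)$ depends on $R$ itself. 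In small examples $\phi$ is neither order-preserving nor order-reversing on $\tau$ (it can fix both the minimal and maximal elements while swapping intermediate ones), so the deduction ``$\phi$ is a bijection of $\tau$, hence fixes the lex-minimum'' does not follow. What actually would make step (b) clean is not lex-minimality at all, but the fact that $\widetilde{T}$ is the \emph{unique} negative twisted chain in $\tau$; then $\phi(\widetilde{T})$ is a negative twisted chain lying in $\phi(\tau)=\tau$ (given star-symmetry of $T_\alpha$), so it must equal $\widetilde{T}$. You cite Lemma 9.2 of \cite{kv} but only as ``$\widetilde{T}$ is a twisted chain''; if that lemma also gives uniqueness, you should invoke it explicitly, and then your proposed lex-order argument becomes unnecessary. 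As written, the proposal flags the decisive difficulty honestly but does not resolve it, and offers the uncrossing fallback only as a sketch.
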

\begin{proof}
It is enough to show that $\widetilde{W}_\gamma$ is a star set in $\bar{\beta} \times \beta$ (for $\widetilde{T}_\alpha$, the proof is similar). Now our claim is, that, $\widetilde{W}_\gamma$ is the distinguished monomial corresponding to $\gamma$ in the sense of proposition $4.3$ of \cite{kr}.\\
Since $\gamma \in I(d)$, so $\gamma =\gamma^{\#}$. Hence if we assume the claim then the proof of the lemma follows from proposition 5.7 of \cite{gr}.\\
\textbf{Proof of the claim:} Since $\widetilde{W}_\gamma$ is a positive twisted chain, therefore $\iota(\widetilde{W}_\gamma)$ is a negative twisted chain.\\
Say, $\iota(\widetilde{W}_\gamma)=\{(c_1,r_1),\hdots ,(c_m,r_m)\}$ with $r_1 < \hdots < r_m$. Hence $\iota(\widetilde{W}_\gamma)$ is a completely disjointed negative subset of $\bar{\beta}\times \beta$ such that for any $(c_i,r_i),(c_j,r_j)\in \iota(\widetilde{W}_\gamma)$ with $i \neq j$, we have :\\
Either $(a) \ (c_i,r_i)\prec (c_j,r_j)$, that is, $r_i<r_j$ and $c_i> c_j$\\
or $(b) \ (c_j,r_j)\prec (c_i,r_i)$, that is, $r_j<r_i$ and $c_j>c_i$\\
or $(c) \ (c_i,r_i) \wedge(c_j,r_j)\notin (\mathbb{N}^2)^- $.\\
Without loss of generality, let us assume $r_i< r_j$ (proof will be similar for $r_i>r_j$). Then either $(a)$ or $(c)$ holds. That is, either by $(a),\ c_i>c_j$ or by $(c),\ (c_j, r_i) \notin (\mathbb{N}^2)^-$. Now $(c_j, r_i) \notin (\mathbb{N}^2)^-$ means $c_j \nless r_i$, that is $c_j \geq r_i$. But here $\iota(\widetilde{W}_\gamma)$ is completely disjointed, so equality cannot happen. So $c_j>r_i$. So, we have if $r_i< r_j$, then either $c_i>c_j$ or $r_i<c_j$. This is nothing but the condition for distinguished monomials.     
\end{proof} 
\begin{lemma}\label{lemma.2}
There exists a bijection from the set of all $\star \star$-multisets in $\bar{\beta} \times \beta$ to the set of all star sets in $\bar{\beta} \times \beta$. 
\end{lemma}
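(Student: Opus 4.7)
The plan is to exhibit the bijection explicitly, since the two objects only differ in the prescribed multiplicity of the diagonal elements. First I would observe the following compatibility between the two pairing conditions. For any diagonal element $(r,c)\in \bar{\beta}\times\beta$ (i.e.\ one with $r=c^{\star}$), the involution $(r,c)\mapsto (c^{\star},r^{\star})$ is the identity, because $c^{\star\star}=c$ and $r=c^{\star}$ forces $r^{\star}=c$. Consequently the implication ``$(r,c)\in U\Rightarrow (c^{\star},r^{\star})\in U$'' in the definition of a $\star\star$-multiset is automatic on the diagonal, and the star-set condition (which is only imposed when $r\neq c^{\star}$) imposes exactly the same constraint on the non-diagonal part.

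Next I would define the two maps. Let $\Phi$ send a $\star\star$-multiset $U$ to the underlying set of $U$, viewed as a multiset in which every element has multiplicity one; in other words, every non-diagonal $(r,c)\in U$ is kept with its multiplicity one, and every diagonal $(r,c)\in U$ has its multiplicity reduced from two to one. Let $\Psi$ send a star set $S$ to the multiset obtained from $S$ by leaving every non-diagonal element with multiplicity one and assigning multiplicity two to every diagonal element of $S$. I would then check that $\Phi(U)$ satisfies condition (i) of a star set (the pairing of non-diagonal elements is inherited from $U$) and condition (ii) (multiplicities are one by construction), and similarly that $\Psi(S)$ satisfies the two defining conditions of a $\star\star$-multiset.

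Finally, I would verify that $\Phi\circ\Psi=\mathrm{id}$ and $\Psi\circ\Phi=\mathrm{id}$. Both compositions act as the identity on the non-diagonal part by definition, and on the diagonal part the composition $\Phi\circ\Psi$ sends multiplicity $1\mapsto 2\mapsto 1$ while $\Psi\circ\Phi$ sends $2\mapsto 1\mapsto 2$, so both are the identity. There is no real obstacle here: the only thing one must be careful about is the observation in the first paragraph, that the involution $(r,c)\mapsto(c^{\star},r^{\star})$ fixes every diagonal element, so the two definitions genuinely differ only in the diagonal multiplicities and the asserted bijection is essentially tautological.
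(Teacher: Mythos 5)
Your proof is correct and takes essentially the same approach as the paper: the paper also defines the bijection to be the ``underlying set'' map and asserts without further elaboration that it is a bijection. Your write-up merely spells out the inverse map and the observation that the involution $(r,c)\mapsto(c^{\star},r^{\star})$ fixes diagonal elements, which the paper leaves implicit.
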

\begin{proof}
Consider the map $\phi$ from the set of all $\star \star$-multisets in $\bar{\beta} \times \beta$ to the set of all star sets in $\bar{\beta} \times \beta$ given by $\phi(U) =$ the underlying set of $U$. Clearly, this map $\phi$ is a bijection.
\end{proof}
\section{The main theorem}\label{ss.tmain}
In this section, we prove one of the main theorems of this paper, namely theorem \ref{t.main3}.
\begin{theorem}\label{t.main1}
\noindent Degree $Y_\alpha ^\gamma (\beta)$ (=$mult_{e_\beta}X_\alpha ^\gamma$) is the number of square free monomials of maximal degree in $P \setminus in_\rhd G_{\alpha ,\beta}^\gamma$(good), where $P:=\ \mathfrak{k}[X_{(r,c)} | (r,c)\in \mathfrak{OR}(\beta)]$
\end{theorem}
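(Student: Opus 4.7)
The plan is to reduce the multiplicity computation to a Hilbert series computation for a monomial ideal, and then extract the combinatorial count via the Stanley--Reisner dictionary. Here is the outline I would follow.

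First, I would recall that $\text{mult}_{e_\beta}X_\alpha^\gamma$ equals the degree of the tangent cone $TC_{e_\beta}(X_\alpha^\gamma)$ as a projective scheme, which in turn equals the degree of the associated graded ring of the local ring at $e_\beta$. Under the standard affine chart around $e_\beta$ in $\mathfrak{M}_d(V)$ (with coordinate ring $P = \mathfrak{k}[X_{(r,c)}\mid (r,c)\in\mathfrak{OR}(\beta)]$), the ideal of the tangent cone is, by the main theorem of \cite{ru}, generated by the leading forms whose Gr\"obner basis with respect to the term order $\rhd$ is precisely $G_{\alpha,\beta}^\gamma(\text{good})$. This is the input I would cite without proof.

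Next, I would invoke the standard fact that passing to the initial ideal with respect to a term order preserves the Hilbert function, and therefore the degree: if $I$ denotes the tangent cone ideal in $P$, then
\begin{equation*}
\deg(P/I) \;=\; \deg\bigl(P/in_\rhd I\bigr) \;=\; \deg\bigl(P/\langle in_\rhd G_{\alpha,\beta}^\gamma(\text{good})\rangle\bigr).
\end{equation*}
Since the elements of $in_\rhd G_{\alpha,\beta}^\gamma(\text{good})$ are squarefree monomials (this will have been set up in \cite{ru}), the quotient is the Stanley--Reisner ring of the simplicial complex $\Delta$ on the vertex set $\mathfrak{OR}(\beta)$ whose faces are exactly the squarefree monomials in $P$ that do \textbf{not} lie in $in_\rhd G_{\alpha,\beta}^\gamma(\text{good})$.

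Finally, I would use the Stanley--Reisner formula for degree: the degree of $P/\langle in_\rhd G_{\alpha,\beta}^\gamma(\text{good})\rangle$ equals the number of facets of $\Delta$ of maximum dimension, which is the same as the number of squarefree monomials of maximal degree sitting in $P\setminus in_\rhd G_{\alpha,\beta}^\gamma(\text{good})$. This is exactly the claimed combinatorial count. I would note that ``maximal degree'' here coincides with the Krull dimension of $P/I$, i.e.\ $\dim X_\alpha^\gamma$, since the tangent cone has the same dimension as the Richardson variety.

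The main obstacle I expect is not any single step, all of which are formal, but rather bookkeeping: one must make sure that the identifications coming from \cite{ru} really do place the tangent cone ideal inside $P$ with $G_{\alpha,\beta}^\gamma(\text{good})$ as its Gr\"obner basis, and that the ``degree of a monomial'' used in the statement matches the grading with respect to which the Hilbert series is being compared. Once those compatibilities are in place, the proof is a one-line application of the Stanley--Reisner degree formula to the initial ideal.
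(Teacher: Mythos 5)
Your argument is correct and is essentially the same as the paper's: both rely on the squarefreeness of the initial terms of the good Gr\"obner basis elements followed by the Hilbert-function/degree argument for monomial initial ideals. The paper outsources the Stanley--Reisner bookkeeping you spell out (Hilbert function preserved under passage to $in_\rhd I$, degree of a squarefree monomial quotient equals the number of maximal-degree squarefree monomials outside the ideal) to a single citation, Lemma 8.5 of \cite{kv}, so your proposal simply unpacks the content of the lemma the paper cites.
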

\begin{Proof}
Since the initial term of $f_{\mathfrak{w},\beta}$ (where $\mathfrak{w}$ is good) is a positive or negative upper extended $\beta$ chain, therefore it is square free. Then by lemma 8.5 of \cite{kv} we have the proof.
\end{Proof}
\begin{theorem}\label{t.main2}
There exists a degree doubling bijection from the set of all monomials in $P \setminus in_\rhd G_{\alpha,\beta}^\gamma$(good) to the set of all non-vanishing special multisets on $\bar{\beta} \times \beta$ bounded by $T_\alpha, \ W_\gamma$.
\end{theorem}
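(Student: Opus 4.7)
The plan is to construct the required bijection explicitly by symmetrizing under the star involution $(r,c)\mapsto(c^*,r^*)$. Given a monomial $\mathfrak{w}=\prod X_{(r,c)}^{n_{(r,c)}}$ in $P$ with support in $\mathfrak{OR}(\beta)$, I would define $\Psi(\mathfrak{w})$ as the multiset on $\bar{\beta}\times\beta$ in which each off-diagonal pair $(r,c)\in\mathfrak{OR}(\beta)$ together with its partner $(c^*,r^*)$ both carry multiplicity $n_{(r,c)}$, while each diagonal element $(r,c)\in\mathfrak{d}(\beta)$ carries multiplicity $2n_{(r,c)}$. Since $\beta=\beta^\#$, the involution preserves $\bar{\beta}\times\beta$ and swaps the regions $\{r<c^*\}$ and $\{r>c^*\}$, so $\Psi(\mathfrak{w})$ is automatically $\#$-invariant with even multiplicities on the diagonal, i.e.\ a special multiset, and clearly $\deg\Psi(\mathfrak{w})=2\deg\mathfrak{w}$.

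Next I would check that $\Psi$ is a bijection onto the set of all special multisets, with inverse obtained by restricting a special multiset $U$ to $\mathfrak{OR}(\beta)$ and halving the diagonal multiplicities (which is legitimate because all diagonal multiplicities of a special multiset are even by definition). This reduces the theorem to proving that $\mathfrak{w}\notin in_\rhd G_{\alpha,\beta}^\gamma(\mathrm{good})$ if and only if $\Psi(\mathfrak{w})$ is non-vanishing and bounded by $T_\alpha,W_\gamma$. Combining Theorem~\ref{t.main1} with the description of the initial terms from \cite{ru}, namely that each $in_\rhd f_{\mathfrak{w},\beta}$ (for good $\mathfrak{w}$) is an upper extended positive or negative $\beta$-chain, this amounts to checking that no sub-monomial of $\mathfrak{w}$ realises a chain exceeding the depth constraints imposed by $T_\alpha$ or $W_\gamma$. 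By the depth characterisation of $\unlhd$ in lemma~9.4 of \cite{kv}, together with lemma~\ref{lemma.2} and the distinguished-monomial interpretation of $\widetilde{T}_\alpha$ and $\widetilde{W}_\gamma$ given in lemma~\ref{lemma.1}, this translates directly into the chain-bound conditions $T_\alpha\unlhd\Psi(\mathfrak{w})^-$ and $\Psi(\mathfrak{w})^+\unlhd W_\gamma$.

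The main obstacle will be the diagonal. Off-diagonal, the correspondence runs parallel to the type-$A$ bijection of \cite{kv}: symmetrising under $\#$ merely pairs the two halves of $\bar{\beta}\times\beta$, and the chain-bound condition transports unchanged across $\Psi$. On the diagonal, however, multiplicities are doubled rather than distributed between two distinct pairs, so a priori a special multiset with large diagonal support need not arise from any standard monomial in $P$. The Gr\"obner basis elements of \cite{ru} attached to diagonal pairs are precisely what encode this obstruction, and they cut out exactly the "non-vanishing" condition on the image side, excluding those special multisets whose diagonal support would force an unattainable chain. Pinning down this correspondence at the diagonal, via the explicit shape of the diagonal Gr\"obner basis elements in \cite{ru}, is the technical heart of the argument; once it is in place, the theorem follows by combining it with the off-diagonal case, which is a direct symplectic adaptation of the type-$A$ computation of \cite{kv}.
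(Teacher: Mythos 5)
Your proposal and the paper's proof take genuinely different routes, and yours has a real gap at its center. The paper does not construct the bijection at all; it runs a sandwich of cardinality inequalities. Writing $a,b,c,d,e$ for the degree-wise cardinalities of, respectively, monomials outside $in_\rhd G_{\alpha,\beta}^\gamma(\text{good})$, monomials outside $in_\rhd I$, standard monomials, non-vanishing semistandard notched bitableaux, and non-vanishing bounded special multisets, it uses $G_{\alpha,\beta}^\gamma(\text{good})\subseteq I$ to get $a\geq b$, basis considerations to get $b=c$, and three theorems of \cite{ru} (5.0.1, 5.0.5, 5.0.6) for $a\leq e$, $d\leq c$, $d=e$, forcing $a=b=c=d=e$. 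Equality of cardinalities in each degree is what yields the degree-doubling bijection; the explicit map $U\mapsto U\cup U^{\#}$ is only invoked afterwards, in Remark~\ref{rem.1}, and its well-definedness as an injection is exactly theorem~5.0.1 of \cite{ru} (the inequality $a\leq e$), not something the paper re-derives.

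You instead propose to exhibit the bijection $\Psi$ directly and then verify the equivalence
\[
\mathfrak{w}\notin in_\rhd G_{\alpha,\beta}^\gamma(\text{good})
\quad\Longleftrightarrow\quad
\Psi(\mathfrak{w})\text{ is non-vanishing and bounded by }T_\alpha,W_\gamma.
\]
The first half (that $\Psi$ is a degree-doubling bijection from monomials in $P$ onto \emph{all} special multisets) is fine. But the displayed equivalence is the entire content of the theorem, and your proposal does not prove it: the forward direction is essentially theorem~5.0.1 of \cite{ru} (and even there only the inequality direction is established), and the converse is precisely what \cite{ru} obtains only indirectly via the standard monomial theory encoded in theorems~5.0.5 and~5.0.6. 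You acknowledge the diagonal is ``the technical heart'' but leave it unresolved; more seriously, the off-diagonal case is \emph{not} a ``direct adaptation'' of \cite{kv} without re-establishing the symplectic standard monomial basis results, which is the work the paper deliberately offloads to \cite{ru}. Two smaller misattributions compound this: Theorem~\ref{t.main1} concerns square-free monomials of maximal degree and does not characterize membership in the initial ideal, and Lemma~\ref{lemma.2} (bijection between $\star\star$-multisets and star sets) is used later for Theorem~\ref{t.main3}, not here. As written, your argument amounts to restating the theorem in the language of chains and depths, not proving it.
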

\begin{proof}
Let $a=$ cardinality of all degree $m$ monomials on $P \setminus in_\rhd G_{\alpha,\beta}^\gamma$(good).\\
$b=$ cardinality of all degree $m$ monomials on $P \setminus in_\rhd I$.\\
$c=$ cardinality of all degree $m$ standard monomials on $Y_\beta ^\gamma (\beta)$.\\
$d=$ cardinality of all degree $2m$ non-vanishing semistandard notched bitableaux on $(\bar{\beta} \times \beta)^\star$.\\
$e=$ cardinality of all degree $2m$ non-vanishing special multisets on $\bar{\beta}\times \beta$ bounded by $T_\alpha, \ W_\gamma$ .\\
So we have to prove $a=e$. Now $G_{\alpha,\beta}^\gamma$(good) $\subseteq I$ (from \cite{ru}), which implies $ in_\rhd G_{\alpha,\beta}^\gamma (good) \ \subseteq in_\rhd I$. So
\begin{equation}\label{e.equ1}
P \setminus in_\rhd G_{\alpha,\beta}^\gamma(good)\ \supseteq P \setminus in_\rhd I
\end{equation}
Again both the monomials of $P \setminus in_\rhd I$ and the standard monomials of $Y_\alpha ^\gamma (\beta)$ form a basis for $P / I$, and thus agree in cardinality in any degree. Therefore $b=c$.
Again from theorem $5.0.6$ of \cite{ru}, we have $d=e$. Also from theorem $5.0.5$ of \cite{ru}, we have $d \leq c$. Now we want to show $d \geq c$. Using equation \ref{e.equ1} we have, $a\geq b$. Again by theorem $5.0.1$ of \cite{ru} we have, $a\leq e$. Therefore $d=e\geq a \geq b =c$, hence $d\geq c$. Therefore $d = c$ and hence, $a\geq b=c=d=e$. We also have $a \leq e$. Hence $a=e$.
\end{proof}
\begin{remark}\label{rem.1}
It follows from the theorem \ref{t.main2} above that, the number of square free monomials of maximal degree in $P \setminus in_\rhd G_{\alpha ,\beta}^\gamma$(good) equals the number of $\star \star$-multisets in $\bar{\beta} \times \beta$ which are of maximal degree among those bounded by $T_{\alpha} , W_\gamma$. [Because, the bijection of theorem \ref{t.main2} above is given by the map $U\mapsto U\cup U^{\#}$.]
\end{remark}
\begin{theorem}\label{t.main3}
$Mult_{e_\beta}X_\alpha ^\gamma$ is the number of star sets $U$ in $\bar{\beta} \times \beta$, which are of maximal degree among those, which are chain-bounded by $\widetilde{T}_\alpha$ and $\widetilde{W}_\gamma$.
\end{theorem}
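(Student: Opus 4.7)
The plan is a short deduction that assembles Theorem \ref{t.main1}, Remark \ref{rem.1}, Lemma \ref{lemma.1}, and Lemma \ref{lemma.2}, together with the depth/twisted-chain machinery of \cite{kv} recalled in Section \ref{s.recap}. No new combinatorial identity is needed; the work is in tracking the correspondences.

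First, by Theorem \ref{t.main1}, $Mult_{e_\beta}X_\alpha^\gamma$ equals the number of square-free monomials of maximal degree in $P\setminus in_\rhd G_{\alpha,\beta}^\gamma(\text{good})$. Remark \ref{rem.1} repackages this count via the doubling map $U\mapsto U\cup U^\#$ as the number of $\star\star$-multisets in $\bar\beta\times\beta$ of maximal degree among those bounded by $T_\alpha, W_\gamma$. This reduces the problem to a purely combinatorial statement about multisets in $\bar\beta\times\beta$.

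Second, I invoke the bijection $\phi$ of Lemma \ref{lemma.2} sending a $\star\star$-multiset $U$ to its underlying set $\phi(U)$, a star set. Because every element of a $\star\star$-multiset appears with multiplicity dictated solely by whether it is diagonal (two) or off-diagonal (one), the degree of $U$ satisfies $|U|=|\phi(U)|+|\phi(U)\cap\mathfrak{d}(\beta)|$, and $\phi$ is a bijection between the two families. Consequently, $\phi$ restricts to a bijection between maximal-degree $\star\star$-multisets bounded by $T_\alpha,W_\gamma$ and maximal-degree (i.e.\ maximal-cardinality) star sets satisfying the corresponding bound condition on their underlying sets.

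Third, I match the two notions of bound. Lemma \ref{lemma.1} asserts that $\widetilde{T}_\alpha$ and $\widetilde{W}_\gamma$ are respectively a negative star set and a positive star set in $\bar\beta\times\beta$, and in particular twisted chains. Using lemma 9.4 of \cite{kv} (for twisted chains, $R\unlhd S\Leftrightarrow R\leq S$) together with the definition of $\widetilde{T}_\alpha$ (resp.\ $\widetilde{W}_\gamma$) as the minimum lex representative in the class of $T_\alpha$ (resp.\ $W_\gamma$), one verifies that a multiset being bounded by $T_\alpha,W_\gamma$ in the sense of Remark \ref{rem.1} is the same as being chain-bounded by $\widetilde{T}_\alpha,\widetilde{W}_\gamma$ in the sense of Section \ref{s.recap}. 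Assembling the three steps yields Theorem \ref{t.main3}.

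The main obstacle is the compatibility of the maximal-degree condition under $\phi$ together with the translation of bounds: because the identity $|U|=|\phi(U)|+|\phi(U)\cap\mathfrak{d}(\beta)|$ is not degree-preserving in general, one must check that the $\#$-symmetric structure of star sets and $\star\star$-multisets forces the two extremal families to correspond bijectively under $\phi$, i.e.\ that the number of diagonal entries in a maximal member is pinned down by the bounds. This is the point where care about the off-diagonal $(r,c)\leftrightarrow(c^\star,r^\star)$ pairing and the diagonal contribution has to be made precise.
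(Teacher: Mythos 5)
Your proposal reproduces the paper's own proof step for step: Theorem \ref{t.main1} to reduce the multiplicity to counting maximal-degree square-free monomials in $P\setminus in_\rhd G_{\alpha,\beta}^\gamma(\text{good})$, Remark \ref{rem.1} (via the doubling map $U\mapsto U\cup U^{\#}$) to convert these to maximal-degree $\star\star$-multisets bounded by $T_\alpha, W_\gamma$, Lemma \ref{lemma.2} to replace $\star\star$-multisets by their underlying star sets, and finally Lemmas \ref{lemma.1} and 9.4 of \cite{kv} to identify ``bounded by $T_\alpha, W_\gamma$'' with ``chain-bounded by $\widetilde{T}_\alpha,\widetilde{W}_\gamma$''. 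So the route is the same as the paper's, not an alternative one.

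What you add is an explicit warning that the Lemma \ref{lemma.2} step is not automatic. You are right to flag it: the bijection $\phi$ (underlying set) satisfies $\deg U = |\phi(U)| + |\phi(U)\cap\mathfrak{d}(\beta)|$, so $\phi$ changes degree exactly by the diagonal content, while boundedness is unchanged under $\phi$ because depth is computed via chains and chains ignore multiplicity. Hence a bijection between the two families of bounded objects does not by itself carry ``maximal degree among bounded $\star\star$-multisets'' to ``maximal cardinality among bounded star sets''; one has to show that maximizing $|\phi(U)|$ and maximizing $|\phi(U)| + |\phi(U)\cap\mathfrak{d}(\beta)|$ over the bounded family single out the same configurations, i.e.\ that the diagonal count in extremal members is pinned down. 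The paper's proof passes over this point with ``Again by lemma \ref{lemma.2}, this equals the number of star sets\ldots''; you name the obstacle honestly but, like the paper, do not actually close it. To complete the argument one would supply that missing justification, e.g.\ via the path-family description of $\mathcal{M}_R^S$ in Section 4, which forces the diagonal content of any maximal bounded star set, or by verifying the convention from \cite{su} under which the degree of a $\star\star$-multiset is taken to be the cardinality of its underlying set, in which case $\phi$ is degree-preserving and the step is trivial. As written, your proposal is faithful to the paper's argument and correctly diagnoses its one soft spot, but it does not repair it.
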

\begin{proof}
Recall from \cite{kv} that, if $U$ is a multiset on $\bar{\beta} \times \beta$, then the monomial $X_U$ is square-free if and only if $U$ is a subset of $\bar{\beta} \times \beta$, that is each of its elements has degree one. By theorem \ref{t.main1}, $Mult_{e_\beta}X_\alpha ^\gamma$ is the number of square free monomials of maximal degree in $P \setminus in_\rhd G_{\alpha,\beta}^\gamma (good)$. By remark \ref{rem.1}, this equals the number of $\star \star$ multisets in $\bar{\beta} \times \beta$, which are of maximal degree among those bounded by $T_\alpha, \ W_\gamma$. Again by lemma \ref{lemma.2}, this equals the number of star sets in $\bar{\beta} \times \beta$, which are of maximal degree among those bounded by $T_\alpha, \ W_\gamma$. However, a subset of $\bar{\beta} \times \beta$ is bounded by $T_\alpha, \ W_\gamma$ if and only if it is bounded by $\widetilde{T}_\alpha$ and $\widetilde{W}_\gamma$ if and only if it is chain bounded by $\widetilde{T}_\alpha$ and $\widetilde{W}_\gamma$, where the last equivalence is due to Lemma $9.4$ of \cite{kv}.
\end{proof}
\section{Path families and multiplicities}
For this section, we let $R$ and $S$ be fixed positive and negative twisted chains contained in $\bar{\beta} \times \beta$ respectively. Let\\
$\mathcal{M}_R= \ max \{U \subset (\bar{\beta} \times \beta)^-\ |\ R\unlhd U $ and $U$ is a star set$\}$,\\
$\mathcal{M}^S=max\{V\subset (\bar{\beta} \times \beta)^+ \ | \  V\unlhd S$ and $V$ is a star set$\}$, \\
and $\mathcal{M}_R ^S= max \{W \subset (\bar{\beta} \times \beta)\ |\ R \unlhd W^- \ and \ W^+ \unlhd S$ and $W$ is a star set$\}$,\\
where in each case by `max' we mean the star sets $U, V$, or $W$ respectively of maximal degree. For example, $\mathcal{M}_R ^S$ consists of the collection of all star sets $W$ of $(\bar{\beta}\times \beta)$ which are of maximal degree among those which are chain bounded by $R,S$. When $R=\widetilde{T}_\alpha$ and $S=\widetilde{W}_\gamma$, $\mathcal{M}_R ^S$ consists precisely of the star sets $U$ of theorem \ref{t.main3}. In order to give a better formulation of \ref{t.main3}, we study the combinatorics of $\mathcal{M}_R ^S$ . Clearly,\\
$\mathcal{M}_R ^S= \{U \dot{\cup} V \ |\ U\in \mathcal{M}_R,\ V \in \mathcal{M}^S\}$.\\
To study $\mathcal{M}_R^S$, just like in \cite{kv}, we begin by considering $\mathcal{M}_R$, and thus restricting attention to negative star sets of $(\bar{\beta}\times \beta) $. Just like in \cite{kv}, a subset $P\ \subset (\bar{\beta}\times \beta)^-$ is \textbf{depth-one} if it contains no two-element chains and if $P$ is depth-one, then it is a \textbf{negative-path} if the consecutive points are `as close as possible' to each other, so that the points form a continuous path on $(\bar{\beta}\times \beta)^-$ which moves only down or to the right. Similarly if $P \in (\bar{\beta} \times \beta)^+$ is depth-one, then it is a \textbf{positive-path} if the consecutive points are `as close as possible' to each other, so that the points form a continuous path on $(\bar{\beta}\times \beta)^+$ which moves only up or to the left.\\
For any $r=(e,f) \in (\bar{\beta}\times \beta)^-$, all of  $\lfloor r \rfloor$ and $\lceil r \rceil$ are defined as in \cite{kv}. But if $r=(e,f) \in (\bar{\beta}\times \beta)^+$, then we define $\lfloor r \rfloor =(e,f^\prime)$, where $f^\prime =$  max$\{y \in \beta \ | \ (e,y)\in (\bar{\beta} \times \beta)^+ \}$  and $\lceil r \rceil = (e^\prime,f)$, where $e^\prime =$  min $\{x \in \bar{\beta} \ | \ (x,f)\in (\bar{\beta} \times \beta)^+ \}$. \textbf{Now we form the path $P_r$} as follows -\\
$(1)$which begins at $\lfloor r \rfloor$, and ends at $\lceil r \rceil$ and \\
$(2)$ if $r=(e_1,f_1), \ r^\prime =(e_1^\prime,f_1^\prime)$ and $r=(r^\prime)^{\#}$, then $P_r ={P_{r^\prime}} ^{\#}$. Also if $r=(e,f)$ and $e=f^\star$, then $P_r$ is a star set in $(\bar{\beta} \times \beta)$.\\
After doing all of this, if we do the similar things as in \cite{kv}, the only difference being that in every case, $U,V,W\subseteq (\bar{\beta}\times \beta)$ are star sets, then we get the theorem below (theorem \ref{t.main4}), which is the main theorem about counting the  multiplicity as the cardinality of a family of certain non-intersecting lattice paths.
\begin{theorem}\label{t.main4}
$Mult_{e_\beta}X_\alpha ^\gamma$ is the number of disjoint unions $\dot{\bigcup}_{r \in \widetilde{T_\alpha}\cup \widetilde{W}_\gamma}P_r$, where $P_r$ is either negative-path or a positive-path from $\lfloor r \rfloor$ to $ \lceil r \rceil$, depending on whether $r$ is negative or positive.
\end{theorem}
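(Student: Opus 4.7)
The plan is to start from Theorem \ref{t.main3}, which identifies $Mult_{e_\beta}X_\alpha^\gamma$ with $|\mathcal{M}_R^S|$ for $R = \widetilde{T}_\alpha$ and $S = \widetilde{W}_\gamma$; by Lemma \ref{lemma.1}, both $R$ and $S$ are themselves star sets, the former negative and the latter positive. The decomposition $\mathcal{M}_R^S = \{U \dot{\cup} V \mid U \in \mathcal{M}_R,\, V \in \mathcal{M}^S\}$ already recorded in the text then reduces the problem to two parallel analyses: describing negative star sets in $\mathcal{M}_R$ and, symmetrically under $\iota$, positive star sets in $\mathcal{M}^S$.

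For $\mathcal{M}_R$, I would mimic Kreiman's path-decomposition in Section $10$ of \cite{kv}: prove that every $U \in \mathcal{M}_R$ admits a unique decomposition $U = \dot{\bigcup}_{r \in R} P_r$ into pairwise disjoint negative paths $P_r$ running from $\lfloor r \rfloor$ to $\lceil r \rceil$, and conversely that any such disjoint union lies in $\mathcal{M}_R$. The forward direction uses the depth function together with maximality of $\deg U$ to pair endpoints of the depth-one layers of $U$ with elements of $R$; the reverse direction uses that the greedy path between prescribed endpoints is the unique depth-compatible subset of maximal cardinality between them. The analogous statement for $\mathcal{M}^S$ follows by applying $\iota$ and swapping the roles of ``up/left'' and ``down/right''.

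The new content beyond \cite{kv} is that $U$ and $V$ must be star sets. Since $R$ is itself a star set, with fixed points under $\#$ exactly on the diagonal, for off-diagonal $r \in R$ one checks that $\lfloor r^{\#}\rfloor = (\lceil r\rceil)^{\#}$ and $\lceil r^{\#}\rceil = (\lfloor r\rfloor)^{\#}$, so condition $(2)$ of the construction of $P_r$ forces $P_{r^{\#}} = P_r^{\#}$; for diagonal $r = (e,f)$ with $e = f^\star$, the two endpoints are themselves swapped by $\#$, forcing $P_r = P_r^{\#}$, which by the observation recorded just before the theorem means $P_r$ is itself a star set. Consequently the path decomposition of any $U \in \mathcal{M}_R$ automatically respects $\#$, so $U$ is a star set precisely when it is a disjoint union of such paths $P_r$ indexed by $r \in R$, with off-diagonal pairs $\{r, r^{\#}\}$ contributing simultaneously. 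The analogous statement for $\mathcal{M}^S$ produces positive paths $P_r$ indexed by $r \in S$.

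Assembling the two decompositions indexed over $R \cup S = \widetilde{T}_\alpha \cup \widetilde{W}_\gamma$ and invoking Theorem \ref{t.main3} yields the stated count. The main obstacle I anticipate is the bookkeeping needed to verify that the greedy/maximal-length path construction of \cite{kv} commutes with $\#$, so that the forced identity $P_{r^{\#}} = P_r^{\#}$ is consistent with (rather than in tension with) the depth-maximality requirement; on the diagonal this reduces to showing that the unique maximal-length negative path between $\lfloor r \rfloor$ and $\lceil r \rceil$ is $\#$-invariant, which will follow from the fact that its image under $\#$ is another negative path with the same endpoints and the same length, hence equal to it by uniqueness.
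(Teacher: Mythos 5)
Your proposal follows the same route the paper takes. The paper gives no formal proof of Theorem \ref{t.main4}\,---\,it only observes, in the paragraph preceding the statement, that one repeats Kreiman's path-decomposition argument from \cite{kv} with $U$, $V$, $W$ required to be star sets\,---\,and you have reconstructed exactly that, additionally spelling out why the floor/ceiling maps and the path construction commute with the $\#$ involution so that the star-set constraint is compatible with Kreiman's decomposition.
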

\begin{eg}
Let $d=5$, that is, $2d=10$. Let $\alpha=(1,2,4,6,8), \beta=(2,4,5,8,10), \gamma=(3,5,7,9,10)$. Clearly $\alpha,\beta,\gamma \in I(d)$ and $\alpha \leq \beta \leq \gamma$. Again, as $\alpha, \beta, \gamma \in I(d)$ so by lemma  \ref{lemma.1}, $\widetilde{T}_\alpha, \widetilde{W}_\gamma$ both are star sets. We want to compute $Mult_{e_\beta}X_\alpha ^\gamma$. The following two diagrams show the negative and positive twisted chains $\widetilde{T}_\alpha =\{r_1,r_2\}$ and $\widetilde{W}_\gamma =\{s_1,s_2,s_3\}$ in $\bar{\beta} \times \beta$; and the set of $\lfloor r \rfloor$'s and $ \lceil r \rceil$'s for all $r \in \widetilde{T}_\alpha \cup \widetilde{W}_\gamma$. Note that, $s_1 = \lfloor s_1 \rfloor = \lceil s_1 \rceil$ and $s_3 = \lfloor s_3 \rfloor = \lceil s_3 \rceil$.
\end{eg}
\begin{picture}(100,120)(0,0)
\matrixput(0,0)(20,0){5}(0,20){5}{\circle*{.5}}
\multiputlist(-15,0)(0,20){9,7,6,3,1}
\multiputlist(0,100)(20,0){2,4,5,8,10}
\put(40,80){$r_1$}
\put(80,40){$r_2$}
\put(0,60){$s_1$}
\put(20,20){$s_2$}
\put(60,){$s_3$}
\linethickness{.1pt}
\dottedline{0.1}(0,70)(10,70)(10,50)(50,50)(50,10)(70,10)(70,0)
\put(190,0){\matrixput(0,0)(20,0){5}(0,20){5}{\circle*{.5}}}
\put(175,0){\multiputlist(-15,0)(0,20){9,7,6,3,1}}
\put(190,0){\multiputlist(0,100)(20,0){2,4,5,8,10}}
\put(190,0){\linethickness{.1pt}
\dottedline{0.1}(0,70)(10,70)(10,50)(50,50)(50,10)(70,10)(70,0)
\put(-20,65){$\lfloor s_1 \rfloor$}
\put(-20,50){$\lceil s_1 \rceil$}
\put(50,-10){$\lfloor s_3 \rfloor$}
\put(50,5){$\lceil s_3 \rceil$}
\put(15,35){$\lceil s_2 \rceil$}
\put(25,20){$\lfloor s_2 \rfloor$}
\put(-10,85){$\lfloor r_1 \rfloor$}
\put(30,63){$\lceil r_1 \rceil$}
\put(80,0){$\lceil r_2 \rceil$}
\put(50,43){$\lfloor r_2 \rfloor$}}
\put(68,-5){$\downarrow$}
\put(70,-10){ boundary of $\mathfrak{N}(\beta)$}
\end{picture}

\vspace{1cm}
\noindent There are four non intersecting path families from $\lfloor r \rfloor$ to $ \lceil r \rceil$, $r \in \widetilde{T}_\alpha \cup \widetilde{W}_\gamma$, as shown below. Thus $Mult_{e_\beta}X_\alpha ^\gamma = 4$.

\begin{picture}(100,140)(0,0)
\matrixput(0,0)(20,0){5}(0,20){5}{\circle*{.5}}
\multiputlist(-15,0)(0,20){9,7,6,3,1}
\multiputlist(0,100)(20,0){2,4,5,8,10}
\linethickness{.1pt}
\dottedline{0.1}(0,70)(10,70)(10,50)(50,50)(50,10)(70,10)(70,0)
\put(0,60){\makebox(0,0){$\bullet$}}
\put(60,0){\makebox(0,0){$\bullet$}}
\put(190,0){\matrixput(0,0)(20,0){5}(0,20){5}{\circle*{.5}}}
\linethickness{1pt}
\dottedline{0.1}(20,40)(20,20)(40,20)
\linethickness{1pt}
\dottedline{0.1}(0,80)(20,80)(20,60)(40,60)
\linethickness{1pt}
\dottedline{0.1}(60,40)(60,20)(80,20)(80,0)
\put(185,0){\multiputlist(-15,0)(0,20){9,7,6,3,1}}
\put(190,0){\multiputlist(0,100)(20,0){2,4,5,8,10}}
\put(190,0){\linethickness{.1pt}
\dottedline{0.1}(0,70)(10,70)(10,50)(50,50)(50,10)(70,10)(70,0)}
\put(190,0){\put(0,60){\makebox(0,0){$\bullet$}}
\put(60,0){\makebox(0,0){$\bullet$}}}
\put(190,0){\linethickness{1pt}
\dottedline{0.1}(20,40)(40,40)(40,20)
\linethickness{1pt}
\dottedline{0.1}(0,80)(20,80)(20,60)(40,60)
\linethickness{1pt}
\dottedline{0.1}(60,40)(60,20)(80,20)(80,0)}
\end{picture}

\begin{picture}(100,180)(0,0)
\matrixput(0,0)(20,0){5}(0,20){5}{\circle*{.5}}
\multiputlist(-15,0)(0,20){9,7,6,3,1}
\multiputlist(0,100)(20,0){2,4,5,8,10}
\linethickness{.1pt}
\dottedline{0.1}(0,70)(10,70)(10,50)(50,50)(50,10)(70,10)(70,0)
\linethickness{1pt}
\dottedline{0.1}(20,40)(20,20)(40,20)
\linethickness{1pt}
\dottedline{0.1}(0,80)(40,80)(40,60)
\linethickness{1pt}
\dottedline{0.1}(60,40)(80,40)(80,0)
\put(0,60){\makebox(0,0){$\bullet$}}
\put(60,0){\makebox(0,0){$\bullet$}}
\put(190,0){\matrixput(0,0)(20,0){5}(0,20){5}{\circle*{.5}}}
\put(185,0){\multiputlist(-15,0)(0,20){9,7,6,3,1}}
\put(190,0){\multiputlist(0,100)(20,0){2,4,5,8,10}}
\put(190,0){\linethickness{.1pt}
\dottedline{0.1}(0,70)(10,70)(10,50)(50,50)(50,10)(70,10)(70,0)}
\put(190,0){\put(0,60){\makebox(0,0){$\bullet$}}
\put(60,0){\makebox(0,0){$\bullet$}}}
\put(190,0){\linethickness{1pt}
\dottedline{0.1}(20,40)(40,40)(40,20)
\linethickness{1pt}
\dottedline{0.1}(0,80)(40,80)(40,60)
\linethickness{1pt}
\dottedline{0.1}(60,40)(80,40)(80,0)}
\end{picture}

\end{document}